\theoremstyle{plain}
\newtheorem{theorem}{Theorem}[section]
\newtheorem{lemma}[theorem]{Lemma}
\newtheorem{corollary}[theorem]{Corollary}
\newtheorem{proposition}[theorem]{Proposition}
\newtheorem{fact}[theorem]{Fact}
\theoremstyle{definition}
\theoremstyle{remark}
\numberwithin{equation}{section}
\def\N{\ensuremath{\mathbb{N}}}
\def\R{\ensuremath{\mathbb{R}}}
\def\E{\ensuremath{\mathbb{E}}}
\def\P{\ensuremath{\mathbb{P}}}
\def\Ind{\ensuremath{\mathbbm{1}}}
\def\to{\rightarrow}
\newcommand{\dd}{\mathrm{d}}
\author{\textsc{Pascal Maillard}\thanks{Department of Mathematics, The Weizmann Institute of Science, POB 26, Rehovot 76100, Israel, e-mail: \texttt{pascal DOT maillard AT weizmann DOT ac DOT il}}}
\title{A note on stable point processes occurring in branching Brownian motion}
\date{January 15, 2013}
\def\Mcal{\ensuremath{\mathcal{M}}}
\def\Ncal{\ensuremath{\mathcal{N}}}
\def\Mstar{\ensuremath{\Mcal^*}}
\def\Nstar{\ensuremath{\Ncal^*}}
\def\supp{\operatorname{supp}}
\DeclareMathOperator*{\esssup}{\operatorname{esssup}}
\begin{document}

\maketitle

{\leftskip=1.5truecm \rightskip=1.5truecm \baselineskip=15pt \small

\noindent{\bfseries Abstract.} We call a point process $Z$ on $\R$ \emph{exp-1-stable} if for every $\alpha,\beta\in\R$ with $e^\alpha+e^\beta=1$, $Z$ is equal in law to $T_\alpha Z+T_\beta Z'$, where $Z'$ is an independent copy of $Z$ and $T_x$ is the translation by $x$. Such processes appear in the study of the extremal particles of branching Brownian motion and branching random walk and several authors have proven in that setting the existence of a point process $D$ on $\R$ such that $Z$ is equal in law to $\sum_{i=1}^\infty T_{\xi_i} D_i$, where $(\xi_i)_{i\ge1}$ are the atoms of a Poisson process of intensity $e^{-x}\,\dd x$ on $\R$ and $(D_i)_{i\ge 1}$ are independent copies of $D$ and independent of $(\xi_i)_{i\ge1}$. In this note, we show how this decomposition follows from the classic \emph{LePage decomposition} of a (union)-stable point process. Moreover, we give a short proof of it in the general case of random measures on $\R$.

\bigskip

\noindent{\bfseries Keywords.} stable distribution ; point process ; random measure ; branching Brownian motion ; branching random walk.

\bigskip

\noindent{\bfseries MSC2010.} 60G55 ; 60G57

}

\section{Introduction}

Let $D$ be a point process on $\R$, $(D_i)_{i\ge 1}$ be independent copies of $D$ and $(\xi_i)_{i\ge1}$ be the atoms of a Poisson process of intensity $e^{-x}\,\dd x$ on $\R$ and independent of $(D_i)_{i\ge1}$. Suppose that the point process $Z$, defined as follows, exists.
\begin{equation}
\label{eq_definition_dppp}
Z = \sum_{i=1}^\infty T_{\xi_i} D_i
\end{equation}
It is then easy to see that for every $\alpha,\beta\in\R$ with $e^\alpha+e^\beta=1$, $Z$ is equal in law to $T_\alpha Z+T_\beta Z'$, where $Z'$ is an independent copy of $Z$ and $T_x$ is the translation by $x$. We call this property \emph{exp-1-stability} or \emph{exponential 1-stability} for a reason which will become clear later.

Processes of the form \eqref{eq_definition_dppp} arose during the study of the extremal particles in branching Brownian motion. Brunet and Derrida \cite[p.\ 18]{Brunet2011} asked the following question: Is it true that every exp-1-stable point process $Z$ admits the decomposition \eqref{eq_definition_dppp}? This question was answered in the affirmative by the author \cite{Maillard2011}, and independently in the special case appearing in branching Brownian motion by Arguin, Bovier, Kistler \cite{Arguin2011,Arguin2011a} and A\"{\i}d\'ekon, Berestycki, Brunet, Shi \cite{Aidekon2011c}. The decomposition \eqref{eq_definition_dppp} was also shown for the branching random walk by Madaule \cite{Madaule2011}, relying on the author's result. See also \cite{Kabluchko2011} for a related result concerning branching random walks. Note that the Poisson process with intensity  $e^{-x}\,\dd x$ is well-known in extreme value theory and describes the maxima of random variables which are independent and  identically distributed according to a law in the domain of attraction of the Gumbel distribution (see \cite[Corollary~4.19]{Resnick1987}). It therefore arises naturally here and in similar situations, for example in the theory of max-stable processes \cite{Kabluchko2009}.

Immediately after the article \cite{Maillard2011} was published on the arXiv, the author was informed by Ilya Molchanov that the representation \eqref{eq_definition_dppp} could be obtained from a classic result known as the \emph{LePage decomposition} of a \emph{stable point process}, which holds true in much more general settings.

The purpose of this note is two-fold: First, we want to outline how the theory of \emph{stability in convex cones} as developped by Davydov, Molchanov and Zuyev~\cite{Davydov2008} yields the above-mentioned LePage decomposition of stable point processes and with it the decomposition \eqref{eq_definition_dppp}. This is the content of Section~\ref{sec:cones}. Second, we give a succinct proof of the decomposition \eqref{eq_definition_dppp} for easy reference, a proof which uses more elementary methods than those of \cite{Davydov2008}. Furthermore, we give the extension of \eqref{eq_definition_dppp} to random measures, which cannot be directly obtained through the results of \cite{Davydov2008} (see Section~\ref{sec:cones}). The statements of the results (Theorem~\ref{th_random_measure} and Corollary~\ref{cor_PP}) and their proofs are the content of Section~\ref{sec:results}.

\subsection*{Branching Brownian motion}

In the remainder of this introduction, we outline the way exp-1-stable processes appear in branching Brownian motion (BBM). Define BBM as follows: Starting with one initial particle at the point $x$ of the real line, this particle performs Brownian motion until an exponentially distributed time of parameter $1/2$, at which it splits into two particles. Starting from the position of the split, both particles then repeat this process independently. 

We are interested in the point process formed by the right-most particles (draw the real line horizontally). It turns out that an important quantity is the so-called \emph{derivative martingale} $W_t = \sum_i (t-X_i(t)) \exp(X_i(t)-t)$, where we sum over all particles at time $t$ and denote the position of the $i$-th particle by $X_i(t)$. This martingale has an almost sure limit $W = \lim_t W_t > 0$ and it has been known since Bramson's \cite{Bramson1983} and Lalley and Sellke's \cite{Lalley1987} work that the position of the right-most particle, centred around $t-(3/2) \log t+\log W$, converges in law to a Gumbel distribution. By looking at a suitable Laplace transform \cite{Madaule2011}, one can strengthen this result to the whole point process $Z_t$ formed by the particles at time $t$. One obtains the existence of a point process $Z$ on $\R$, such that, starting from any configuration of finitely many particles, $T_{- t+(3/2) \log t-\log W}Z_t $  converges in law to $Z$ as $t\to\infty$.

Once the convergence of the point process is established, one now readily sees that the limiting process is exp-1-stable \cite{Brunet2011a,Madaule2011}: Take two BBMs and denote their derivative martingale limits by $W$ and $W'$, respectively. The union of both processes is then again a BBM with derivative martingale limit $W'' = W+W'$. Applying the before-mentioned convergence result to both BBMs as well as to their union, we get that for almost every realisation of $W$ and $W'$, $T_{\log(W+W')} Z$ is equal in law to $T_{\log W} Z + T_{\log W'} Z'$, where $Z$ and $Z'$ are iid and independent of $W$ and $W'$. Since $W$ and $W'$ can take any positive value (for example by varying the initial configurations), this yields the exp-1-stability of $Z$.

We emphasise that with this approach, one does not need to characterise the point process $Z$ directly, as it has been done before \cite{Arguin2011,Arguin2011a,Aidekon2011c}. This is helpful for models where such a direct characterisation would be complicated, for example for  branching random walks \cite{Madaule2011}.

\section{Stability in convex cones}
\label{sec:cones}

Let $Z$ be an exp-1-stable point process on $\R$. Define $Y$ to be the image (in the sense of measures) of $Z$ by the map $x\mapsto e^x$ (this was suggested by Ilya Molchanov). $Y$ is then a \emph{1-stable} point process on $(0,\infty)$, i.e.\ $Y$ is equal in law to $aY + bY'$, where $Y'$ is an independent copy of $Y$, $a,b\ge 0$ with $a+b=1$ and $aY$ is the image of $Y$ by the map $x\mapsto ax$. Note that if $Y$ is a simple point process (i.e.\ every atom has unit mass), then the set of its points is a random closed subset of $(0,\infty)$ and the stability property is then also known as the \emph{union-stability} for random closed sets (see e.g.\ \cite[Ch. 4.1]{Molchanov2005}).

Davydov, Molchanov and Zuyev~\cite{Davydov2008} have introduced a very general framework for studying stable distributions in \emph{convex cones}, where a convex cone $\mathbb K$ is a topological space equipped with two continuous operations: addition (i.e.\ a commutative and associative binary operation $+$ with neutral element $\mathbf e$) and multiplication by positive real numbers, the operations satisfying some associativity and distributivity conditions\footnote{One requires in particular that $a(x+y) = ax+ay$ for every $a>0$, $x,y\in\mathbb K$, but not that $(a+b)x = ax + bx$ for every $a,b>0$, $x\in\mathbb K$. }. Furthermore, $\mathbb K\backslash\{\mathbf e\}$ must be a complete separable metric space. For example, the space of compact subsets of $\R^d$ containing the origin is a convex cone, where the addition is the union of sets, the multiplication by $a>0$ is the image of the set by the map $x\mapsto ax$ and the topology is induced by the Hausdorff distance (see Example~8.11 in \cite{Davydov2008}). Furthermore, it is a \emph{pointed cone}, in the sense that there exists a unique \emph{origin} $\mathbf 0$, such that for each compact set $K\subset \R^d$, $aK\to \mathbf 0$ as $a\to 0$ (the origin is of course $\mathbf 0 = \{0\}$). The existence of 
the 
origin permits to define a \emph{norm} by $\|K\| = d(\mathbf 0,K)$, where $d$  is the Hausdorff distance. An example of a convex cone without origin (Example~8.23 in \cite{Davydov2008}) is the space of (positive) Radon measures on $\R^d\backslash\{0\}$ equipped with the topology of vague convergence, the usual addition of measures and multiplication by $a>0$ being defined as the image of the measure by the map $x\mapsto ax$, as above. 

A random variable $Y$ with values in $\mathbb K$ is called \emph{$\alpha$-stable}, $\alpha>0$, if $a^{1/\alpha}Y + b^{1/\alpha}Y'$ is equal in law to $(a+b)^{1/\alpha}Y$ for every $a,b>0$, where $Y'$ is an independent copy of $Y$. With the theory of Laplace transforms and infinitely divisible distributions on semigroups (the main reference to this subject is \cite{Berg1984}), the authors of \cite{Davydov2008} show that to every $\alpha$-stable random variable $Y$ there uniquely corresponds a \emph{L\'evy measure} $\Lambda$ on a certain second dual of $\mathbb K$ which is \emph{homogeneous of order $\alpha$}, i.e.\ $\Lambda(aB) = a^\alpha\Lambda(B)$ for any Borel set $B$. Since $\Lambda$ is \emph{a priori} only defined on this second dual of $\mathbb K$, a considerable part of the work in \cite{Davydov2008} is to give conditions under which $\Lambda$ is supported by $\mathbb K$ itself. Moreover, and this is their most important result, under some additional conditions, $Y$ can be represented by its \emph{LePage series}, i.e.\ the sum over the atoms of a Poisson process on $\mathbb K$ with intensity measure $\Lambda$.


Assuming that all the above conditions are verified, one can now disintegrate the homogeneous L\'evy measure $\Lambda$ into a radial and an angular component, such that $\Lambda = cr^{-\alpha-1}\dd r\times \sigma$ for $c>0$ and some measure $\sigma$ on the unit sphere $\mathbb S=\{x\in\mathbb K: \|x\|=1\}$. This is also called the \emph{spectral decomposition} and $\sigma$ is called the \emph{spectral measure}. If $\sigma$ has unit mass, then the LePage series can be written as
\begin{equation}
\label{eq_lepage}
Y = \sum_i \xi_i X_i,
\end{equation}
where $\xi_1,\xi_2,\ldots$ are the atoms of a Poisson process of intensity $cr^{-\alpha-1}\dd r$ and $X_1,X_2,\ldots$ are iid with law $\sigma$, independent of the $\xi_i$.

This allows us to prove the decomposition \eqref{eq_definition_dppp} for a simple exp-1-stable point process~$Z$: If $Y$ is  the point process obtained from $Z$ through the exponential transformation from the beginning of this section then $\supp Y \cup \{0\}$ is a random compact subset of $\R$ containing the origin, assuming the process $Z$ almost surely has only finitely many points in $\R_+$ (we will prove this simple fact in Lemma~\ref{lem_density} below). Hence, it is a random element of the cone from the first example given above. This cone satisfies the conditions required in \cite{Davydov2008}, such that the results there can be applied to yield the LePage decomposition \eqref{eq_lepage} of $Y$. This immediately implies the  decomposition \eqref{eq_definition_dppp} for $Z$.

If $Z$ is a general random measure on $\R$, the same exponential transformation can be applied, such that $Y$ becomes a 1-stable random measure on $(0,\infty)$, i.e.\ an element of the cone from the second example above. Unfortunately, this cone does not satisfy the conditions in  \cite{Davydov2008}, such that their results cannot be used directly\footnote{In particular, the theorems in \cite{Davydov2008} require that the cone be pointed and that the stable random elements have no Gaussian component, both conditions being violated by the cone of random measures (see the remark after Fact~\ref{fact_idrm} for the second condition). Note however that although the cone does not have an origin, it is still possible to define a ``norm'' on the subspace of random measures which assign finite mass to $[1,\infty)$, see the definition of the map $M$ in Section~\ref{sec:definitions}.}, although their very general methods could probably be applied in this setting as well.


\section{A succinct proof of the decomposition (\ref{eq_definition_dppp})}
\label{sec:results}

As mentioned in the introduction, we will give here a short proof of the decomposition \eqref{eq_definition_dppp} and its extension to random measures, effectively yielding a LePage decomposition for stable random measures on $(0,\infty)$. Instead of applying the general methods of harmonic analysis on semigroups used in \cite{Davydov2008}, we will rely on the much more elementary treatment of Kallenberg \cite{Kal1983} on random measures. We hope that our proof will be more accessible to probabilists who are not familiar with the methods used in \cite{Davydov2008}. Note that it can be easily generalised to give a LePage decomposition for stable random measures on $\R^d\backslash\{0\}$ or more general spaces. However, for simplicity and because of its interest in applications, we will stick to the one-dimensional setting. For the same reasons, we will also keep the notion of exp-stability instead of the usual stability.


\subsection{Definitions and notation}
\label{sec:definitions}

We denote by $\Mcal$ the space of (positive) Radon measures on $\R$. Note that $\mu\in\Mcal$ if and only if $\mu$ assigns finite mass to every bounded Borel set in $\R$. We further denote by $\Ncal$ the subspace of counting (i.e.\ integer-valued) measures. It is known (see e.g.\ \cite{DV2003}, p.\ 403ff) that there exists a metric $d$ on $\Mcal$ which induces the vague topology and under which $(\Mcal,d)$ is complete and separable. We further set $\Mstar = \Mcal \backslash\{0\}$ (where $0$ denotes the null measure), which is an open subset and hence a complete separable metric space when endowed with the metric $d^*(\mu,\nu) = d(\mu,\nu) + |d(\mu,0)^{-1} - d(\nu,0)^{-1}|$, equivalent to $d$ on $\Mstar$ (\cite{BouGT2}, IX.6.1, Proposition~2). The spaces $\Ncal$ and $\Nstar = \Ncal\backslash\{0\}$ are closed subsets of $\Mcal$ and $\Mstar$, and therefore complete separable metric spaces as well (\cite{BouGT2}, IX.6.1, Proposition~1).

For every $x\in\R$, we define the translation operator $T_x:\Mcal\to\Mcal$, by $(T_x \mu)(A) = \mu(A-x)$ for every Borel set $A\subset\R$. Furthermore, we define the measurable map $M:\Mcal\to\R\cup\{+\infty\}$ by
\[
M(\mu) = \inf\{x\in\R: \mu((x,\infty)) < 1\wedge (\mu(\R)/2)\},
\] 
where we use the notation $x\wedge y = \min(x,y)$ and define $\inf\emptyset = \infty$ (in particular,  $M(0) = +\infty$). Note that for $\mu\in\Mstar$, we have $M(\mu)<\infty$ if and only if $\mu(\R_+)<\infty$. If furthermore $\mu\in\Nstar$, then $M(\mu)$ is the position of the rightmost atom of $\mu$, i.e.\ $M(\mu) = \esssup \mu$. It is easy to show that the maps $(x,\mu)\mapsto T_x\mu$ and $M$ are continuous, hence measurable.

A \emph{random measure} $Z$ on $\R$ is a random variable taking values in $\Mcal$. If $Z$ takes values in $\Ncal$, we also call $Z$ a \emph{point process}. Let $\mathscr F$ denote the set of non-negative measurable functions $f:\R\to\R_+=[0,\infty)$. For every $f\in\mathscr F$, we define the \emph{cumulant}
\[K(f) = K_Z(f) = -\log \E\left[\exp(-\langle Z,f\rangle )\right]\in[0,\infty],\]
where $\langle \mu,f\rangle  = \int_\R f(x)\mu(\dd x)$. The cumulant uniquely characterises $Z$ (\cite{DV2003}, p.\ 161).

We say that $Z$ is \emph{exp-1-stable} or simply \emph{exp-stable} if for every $\alpha,\beta\in\R$ with $e^\alpha+e^\beta=1$, $Z$ is equal in law to $T_\alpha Z+T_\beta Z'$, where $Z'$ is an independent copy of $Z$.

The following theorem and its corollary are precise statements of the decomposition \eqref{eq_definition_dppp} and form the main results of this paper.

\begin{theorem}
\label{th_random_measure}
A function $K:\mathscr F\to\R_+$ is the cumulant of an exp-stable random measure on $\R$ if and only if for every $f\in\mathscr F$,
\begin{equation}
\label{eq_K_exp-stable}
K(f) = c\int_\R e^{-x}f(x)\,\dd x + \int_\R e^{-x} \int_{\Mstar} [1- \exp(-\langle \mu,f \rangle)] T_x\Delta(\dd \mu)\,\dd x,
\end{equation}
for some constant $c\ge0$ and some measure $\Delta$ on $\Mstar$, such that for every bounded Borel set $A\subset\R$,
\begin{equation}
\label{eq_condition_Delta}
\int_\R e^x\int_0^\infty (1\wedge y) \Delta(\mu(A+x)\in\dd y)\,\dd x < \infty.
\end{equation} Moreover, $\Delta$ can be chosen such that $\Delta(M(\mu)\ne 0) = 0$, and as such, it is unique.
\end{theorem}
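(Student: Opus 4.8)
The plan is to reduce everything to the canonical representation of infinitely divisible random measures. For the ``if'' direction I would exhibit the candidate $Z$ directly, as the superposition of the deterministic measure $c\,e^{-x}\,\dd x$ and the Poisson cluster measure $\sum_i T_{\xi_i}\mu_i$, where $\{(\xi_i,\mu_i)\}$ are the atoms of a Poisson process on $\R\times\Mstar$ with intensity $e^{-x}\,\dd x\otimes\Delta$. Condition \eqref{eq_condition_Delta} is exactly the criterion for this superposition to be almost surely Radon, and the Laplace functional of a Poisson process shows that its cumulant is the right-hand side of \eqref{eq_K_exp-stable}. Exp-stability is then read off the formula: substituting $x\mapsto x-\alpha$ in \eqref{eq_K_exp-stable} gives the scaling identity $K(f(\cdot+\alpha))=e^{\alpha}K(f)$, so for $e^\alpha+e^\beta=1$ one has $K(f(\cdot+\alpha))+K(f(\cdot+\beta))=(e^\alpha+e^\beta)K(f)=K(f)$, which is precisely the cumulant form of $Z\deq T_\alpha Z+T_\beta Z'$.

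For the converse, iterating the defining relation (splitting $Z$ into $n$ independent copies translated by $-\log n$) shows that an exp-stable $Z$ is infinitely divisible, so I may invoke the canonical representation, which provides a \emph{unique} pair consisting of a deterministic measure $\lambda$ and a Lévy measure $\Lambda$ on $\Mstar$ with $K(f)=\langle\lambda,f\rangle+\int_{\Mstar}(1-e^{-\langle\mu,f\rangle})\,\Lambda(\dd\mu)$. Since $\langle\lambda,f(\cdot+\alpha)\rangle=\langle T_\alpha\lambda,f\rangle$ and $\langle\mu,f(\cdot+\alpha)\rangle=\langle T_\alpha\mu,f\rangle$, the random measure $T_\alpha Z$ has canonical data $(T_\alpha\lambda,T_\alpha\Lambda)$; superposing the two independent translated copies in the stability relation and appealing to uniqueness forces the fixed-point equations $T_\alpha\lambda+T_\beta\lambda=\lambda$ and $T_\alpha\Lambda+T_\beta\Lambda=\Lambda$ for all $\alpha,\beta$ with $e^\alpha+e^\beta=1$.

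Solving these is the analytic heart of the proof. Testing the first equation against $f\in\mathscr F$ and writing $H(s)=\langle\lambda,f(\cdot+\log s)\rangle$, the two-term relation becomes $H(s)+H(1-s)=H(1)$; combined with the three-term relation obtained by iterating the stability once more, this yields the Cauchy equation $H(a)+H(b)=H(a+b)$ on $(0,1)$, hence $H(s)=sH(1)$ since $H$ is measurable. Unravelling gives $T_u\lambda=e^u\lambda$ for every $u$, which forces $\lambda=c\,e^{-x}\,\dd x$ (indeed $e^{x}\lambda(\dd x)$ is then translation invariant, hence a constant multiple of Lebesgue measure). The same argument applied to $\Lambda$ yields the homogeneity $T_u\Lambda=e^u\Lambda$.

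The final and, I expect, hardest step is to disintegrate this homogeneous $\Lambda$ along the map $M$. The relevant equivariance is $M(T_x\mu)=M(\mu)+x$, which makes $\Phi\colon(x,\nu)\mapsto T_x\nu$ a bimeasurable bijection of $\R\times\{M=0\}$ onto $\{M<\infty\}$. Before applying $\Phi$ one must rule out escape to infinity, i.e.\ show $\Lambda(M=\infty)=0$. For this I would use that, by homogeneity, $v\mapsto\int_{\Mstar}(1\wedge\mu((v,v+1]))\,\Lambda(\dd\mu)$ equals $e^{-v}$ times its value at $v=0$ and is finite (local finiteness of $Z$), so its sum over $v=0,1,2,\dots$ is finite; swapping sum and integral and using that $\sum_{k\ge0}(1\wedge\mu((k,k+1]))=\infty$ for every $\mu$ with $M(\mu)=\infty$ then forces $\Lambda(M=\infty)=0$. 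Transporting $\Lambda$ through $\Phi$ and invoking homogeneity once more, the transported measure is quasi-invariant under first-coordinate shifts and therefore of product form $e^{-x}\,\dd x\otimes\Delta$ for a unique measure $\Delta$ carried by $\{M=0\}$; this reproduces the second term of \eqref{eq_K_exp-stable}, and \eqref{eq_condition_Delta} is just the image of the local-finiteness condition for $\Lambda$ under $\Phi$. Uniqueness of $\Delta$ (subject to $\Delta(M(\mu)\ne0)=0$) follows from the uniqueness of $(\lambda,\Lambda)$ together with the uniqueness of the product disintegration.
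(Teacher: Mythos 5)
Your proposal is correct and follows essentially the same route as the paper: infinite divisibility plus Kallenberg's canonical pair $(\lambda,\Lambda)$, a Cauchy-functional-equation argument yielding the homogeneities $T_u\lambda=e^u\lambda$ and $T_u\Lambda=e^u\Lambda$, exclusion of $\Lambda$-mass on $\{M=\infty\}$, and a Rosinski-style disintegration of $\Lambda$ along the shift map $\Phi$ into the product form $e^{-x}\,\dd x\otimes\Delta$. The only departures are cosmetic: you prove $\Lambda$-a.e.\ $\mu(\R_+)<\infty$ by a direct Tonelli summation of $v\mapsto\int(1\wedge\mu((v,v+1]))\,\Lambda(\dd\mu)=e^{-v}\cdot\mathrm{const}$ instead of the paper's Markov-bound/Borel--Cantelli argument (both work), and you assert the product-form disintegration from relative invariance under first-coordinate shifts where the paper carries it out explicitly, checking $\sigma$-finiteness on sets $A_n\times[-n,n]$ via \eqref{eq_Lambda} and running the Cauchy argument fiberwise on the conditional kernel.
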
 


\begin{corollary}
\label{cor_PP}
A point process $Z$ on $\R$ is exp-stable if and only if it has the representation \eqref{eq_definition_dppp} for some point process $D$ on $\R$ satisfying 
\begin{equation}
\label{eq_condition_D_PP}
\int_0^{\infty} \P(D(A+x) > 0) e^x\,\dd x < \infty.
\end{equation}
Moreover, if the above holds, then there exists a unique pair $(m,D)$ with $m\in\R\cup\{+\infty\}$ and $D$ a point process on $\R$ such that $\P(M(D) = m) = 1$ and \eqref{eq_definition_dppp} and \eqref{eq_condition_D_PP} are satisfied.
\end{corollary}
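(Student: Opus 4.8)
The plan is to derive Corollary~\ref{cor_PP} from Theorem~\ref{th_random_measure} by specialising the cumulant formula \eqref{eq_K_exp-stable} to the case where $Z$ takes values in $\Ncal$, and then repackaging the pair $(c,\Delta)$ as a pair $(m,D)$. Throughout I read \eqref{eq_K_exp-stable} probabilistically: it says that $Z$ is equal in law to the independent superposition $Z_1+Z_2$ of a deterministic measure $Z_1$ with density $c\,e^{-x}$ and a Poisson cluster process $Z_2$ whose clusters follow the intensity measure $\Lambda(\dd\mu)=\int_\R e^{-x}\,T_x\Delta(\dd\mu)\,\dd x$ on $\Mstar$.

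For the implication ``exp-stable $\Rightarrow$ \eqref{eq_definition_dppp}'' I would first pin down the constraints that integer-valuedness of $Z$ imposes on $(c,\Delta)$; I expect this to be the main obstacle. Comparing the absolutely continuous parts in $Z\deq Z_1+Z_2$ forces $c=0$, since a point process is purely atomic while $c\,e^{-x}\,\dd x$ is absolutely continuous and $Z_2$ is a positive measure. That $\Delta$ is carried by $\Nstar$ then follows from a cluster-wise argument: on any bounded set only finitely many clusters contribute (by local finiteness, i.e.\ $\Lambda(\mu(A)>0)<\infty$), so each must already be integer-valued there; I would invoke Kallenberg's criterion for a Poisson cluster process to be a point process rather than reprove this. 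Next, since $\Delta$ is concentrated on $\{M(\mu)=0\}$ and a Radon counting measure with rightmost atom at $0$ necessarily has $\mu(\{0\})\ge 1$, testing \eqref{eq_condition_Delta} with a bounded open interval $A\ni 0$ shows that the integrand is at least $e^{-\delta}\Delta(\Nstar)$ on a neighbourhood $(-\delta,\delta)$ of the origin; hence \eqref{eq_condition_Delta} forces $\Delta(\Nstar)<\infty$.

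With $\Delta$ finite I define the pair $(m,D)$: if $\Delta=0$ then $Z=0$ and I set $m=+\infty$, $D=0$ (consistent with $M(0)=+\infty$); otherwise $m=\log\Delta(\Nstar)\in\R$ and $D=T_m D^0$, where $D^0$ has the probability law $\Delta/\Delta(\Nstar)$ supported on $\{M(\mu)=0\}$, so that $\P(M(D)=m)=1$. The substitution $x'=x+m$ in the cumulant of $\sum_i T_{\xi_i}D_i$ identifies its cluster intensity with $\Lambda$, so the two cumulants agree and \eqref{eq_definition_dppp} holds in law; the same substitution yields \eqref{eq_condition_D_PP} from \eqref{eq_condition_Delta}. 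Conversely, given a representation \eqref{eq_definition_dppp} satisfying \eqref{eq_condition_D_PP}, the Laplace functional of the underlying Poisson process on $\R\times\Ncal$ shows directly that the cumulant of $Z$ has the form \eqref{eq_K_exp-stable} with $c=0$ and $\Delta$ the law of $D$ restricted to $\Nstar$, and that \eqref{eq_condition_D_PP} is equivalent to \eqref{eq_condition_Delta} (the two differ only by $\int_{-\infty}^0 e^x\,\P(D(A+x)>0)\,\dd x\le 1$); since \eqref{eq_condition_D_PP} also guarantees that finitely many translated clusters charge each bounded set, $Z$ is a well-defined point process, and Theorem~\ref{th_random_measure} then gives exp-stability.

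Finally, uniqueness of $(m,D)$ is inherited from the uniqueness of $\Delta$ in Theorem~\ref{th_random_measure}: under the normalisation $\P(M(D)=m)=1$ the correspondence $(m,D)\leftrightarrow\Delta$ is a bijection, since from $\Delta\ne 0$ one reads off $m=\log\Delta(\Nstar)$ and then the law of $T_{-m}D$ as $\Delta/\Delta(\Nstar)$, while $\Delta=0$ corresponds to the degenerate pair $(+\infty,0)$. The only genuinely delicate step is the integer-valuedness argument isolating $c=0$ and $\supp\Delta\subseteq\Nstar$; the remainder is a change of variables together with the already-established Theorem~\ref{th_random_measure}.
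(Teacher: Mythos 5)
Your proposal is correct and takes essentially the same route as the paper: the paper likewise specialises Theorem~\ref{th_random_measure}, invokes the point-process criterion (Fact~\ref{fact_idpp}) to force $c=0$ and concentrate $\Delta$ on $\Nstar$, deduces $\Delta(\Nstar)<\infty$ from \eqref{eq_condition_Delta} together with $\mu(\{0\})\ge 1$ on $\{M(\mu)=0\}$, sets $m=\log\Delta(\Nstar)$ with $D$ distributed as $e^{-m}T_m\Delta$, and gets uniqueness of $(m,D)$ from the uniqueness of $\Delta$ in Lemma~\ref{lem_decomp}. The only cosmetic difference is that you rule out $c>0$ by a purely-atomic versus absolutely-continuous comparison, where the paper obtains $\lambda=0$ directly from Fact~\ref{fact_idpp}, using that exp-stability makes $Z$ infinitely divisible \emph{as a point process}.
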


\subsection{Infinitely divisible random measures}
\label{sec_infinite_divisibility}
Our proof of Theorem~\ref{th_random_measure} is based on the theory of infinitely divisible random measures as exposed in Kallenberg \cite{Kal1983}. A random measure $Z$ is said to be \emph{infinitely divisible} if for every $n\in\N$ there exist iid random measures $Z^{(1)},\ldots,Z^{(n)}$ such that $Z$ is equal in law to $Z^{(1)} + \cdots + Z^{(n)}$. It is said to be infinitely divisible as a point process if $Z^{(1)}$ can be chosen to be a point process. Note that a (deterministic) counting measure is infinitely divisible as a random measure but not as a point process.

The main result about infinitely divisible random measures is the following (see \cite{Kal1983}, Theorem 6.1 or \cite{DV2008}, Proposition 10.2.IX, however, note the error in the theorem statement of the latter reference: $F_1$ may be infinite as it is defined).

\begin{fact}
\label{fact_idrm}
A random measure $Z$ with cumulant $K(f)$ is infinitely divisible if and only if
\[K(f) = \langle\lambda,f\rangle + \int_{\Mstar} [1- \exp(-\langle \mu,f \rangle)] \Lambda(\dd \mu),\]
where $\lambda\in\Mcal$ and $\Lambda$ is a measure on $\Mstar$ satisfying
\begin{equation}
\label{eq_Lambda}
\int_0^\infty (1\wedge x) \Lambda(\mu(A) \in \dd x) < \infty,
\end{equation}
for every bounded Borel set $A\subset\R$.
\end{fact}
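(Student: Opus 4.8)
The statement is the classical Lévy--Khinchin representation for infinitely divisible random measures, so I only sketch the argument. The plan is to treat the two implications separately: sufficiency is elementary and amounts to an explicit Poisson construction, while necessity requires building the canonical (Lévy) measure $\Lambda$ on the infinite-dimensional space $\Mstar$.

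For sufficiency, suppose $K$ has the stated form. I would realise $Z$ explicitly as $Z = \lambda + \sum_i \mu_i$, where $\{\mu_i\}$ are the atoms of a Poisson process $\Pi$ on $\Mstar$ with intensity $\Lambda$. Condition \eqref{eq_Lambda}, read as $\int_{\Mstar}(1\wedge\mu(A))\,\Lambda(\dd\mu)<\infty$ for every bounded Borel set $A$, is exactly the criterion for $\sum_i \mu_i(A)$ to be finite almost surely, so $Z$ is a well-defined Radon measure. The exponential formula for Poisson processes then gives $\E[\exp(-\langle Z,f\rangle)] = \exp(-\langle\lambda,f\rangle)\exp(-\int_{\Mstar}(1-e^{-\langle\mu,f\rangle})\,\Lambda(\dd\mu))$, which is the asserted cumulant. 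Infinite divisibility is immediate: splitting $\lambda = n\cdot(\lambda/n)$ and superposing $n$ independent Poisson processes of intensity $\Lambda/n$ reproduces $Z$ in law as a sum of $n$ iid summands.

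For necessity, suppose $Z$ is infinitely divisible and let $Z^{(1/n)}$ be a summand, so that $K(f) = n K_{1/n}(f)$ where $K_{1/n}$ is the cumulant of $Z^{(1/n)}$. Since $K_{1/n}(f)=K(f)/n\to 0$, expanding $-\log$ yields $K(f) = \lim_n n\,\E[1-\exp(-\langle Z^{(1/n)},f\rangle)]$, the error being $O(K(f)^2/n)$. Writing $\Lambda_n$ for the restriction of $n\,\P(Z^{(1/n)}\in\cdot)$ to $\Mstar$ (the atom at the null measure contributes $0$ to the integrand), this reads $K(f) = \lim_n \int_{\Mstar}(1-e^{-\langle\mu,f\rangle})\,\Lambda_n(\dd\mu)$. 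Using $1-e^{-t}\ge(1-e^{-1})(1\wedge t)$, the bound $\int_{\Mstar}(1\wedge\langle\mu,f\rangle)\,\Lambda_n(\dd\mu)\le K(f)/(1-e^{-1})+o(1)$ holds uniformly in $n$; this controls the mass of $\Lambda_n$ on sets where $\langle\cdot,f\rangle$ is large and, in the limit with $f=\Ind_A$, yields the integrability \eqref{eq_Lambda} for the limiting measure $\Lambda$. The mass of $\Lambda_n$ escaping toward the null measure I would handle by linearising $1-e^{-\langle\mu,f\rangle}\approx\langle\mu,f\rangle$ there; its contribution converges to a positive linear functional of $f$, which a Riesz-type representation identifies with $\langle\lambda,f\rangle$ for a deterministic $\lambda\in\Mcal$, while the remaining mass produces $\int_{\Mstar}(1-e^{-\langle\mu,f\rangle})\,\Lambda(\dd\mu)$.

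The main obstacle is precisely this last step of the necessity direction. The test functional $\mu\mapsto 1-e^{-\langle\mu,f\rangle}$ is bounded and continuous but has no compact support in $\Mstar$: it neither decays as $\langle\mu,f\rangle\to\infty$ nor is supported away from the null measure, so vague convergence of the $\Lambda_n$ alone does not license passage to the limit inside the integral. Separating the drift $\lambda$ from the genuine Lévy mass $\Lambda$, and verifying that these two limits exist \emph{simultaneously and consistently} for all $f\in\mathscr F$ on the infinite-dimensional Polish space $(\Mstar,d^*)$, is the technical heart of the argument and is exactly where the completeness and separability established in Section~\ref{sec:definitions} are needed.
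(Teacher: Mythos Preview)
The paper does not prove this statement at all: it is stated as a \emph{Fact} and attributed to Kallenberg~\cite{Kal1983}, Theorem~6.1 (and \cite{DV2008}, Proposition~10.2.IX). There is therefore no ``paper's own proof'' to compare your attempt against; the result is imported wholesale as a black box and then \emph{used} in the proofs of Lemma~\ref{lem_density} and Lemma~\ref{lem_decomp}.

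That said, your sketch is the standard route and is broadly correct in outline; in particular you have put your finger on the real difficulty, namely the simultaneous extraction of the drift $\lambda$ and the L\'evy measure $\Lambda$ from the tight-but-not-relatively-compact sequence $(\Lambda_n)$ on the non-locally-compact space $\Mstar$. Kallenberg's proof handles exactly this by first reducing to finite-dimensional marginals (the vectors $(Z(A_1),\dots,Z(A_k))$ for bounded Borel $A_i$), invoking the classical L\'evy--Khinchin theorem on $\R_+^k$, and then assembling the resulting family of L\'evy measures into a single $\Lambda$ on $\Mstar$ via a projective-limit/consistency argument. Your more direct tightness-on-$\Mstar$ approach can be made to work too, but the step you flag as ``the main obstacle'' is genuinely delicate and would need more than a Riesz-type remark to be complete: one must show that the near-zero mass of $\Lambda_n$ really does linearise uniformly in $f$, which is where the finite-dimensional reduction earns its keep.
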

The probabilistic interpretation (\cite{Kal1983}, Lemma 6.5) of this fact is that $Z$ is the superposition of the non-random measure $\lambda$ and of the atoms of a Poisson process on $\Mstar$ with intensity $\Lambda$. In the general framework of infinitely divisible distributions on semigroups used in \cite{Davydov2008} the measures $\lambda$ and $\Lambda$ are called the \emph{Gaussian component} and the \emph{L\'evy measure}, respectively. Fact~\ref{fact_idpp} has the following analogue in the case of point processes (\cite{DV2008}, Proposition 10.2.V), where the measure $\Lambda$ is also called the \emph{KLM measure}. Note that the Gaussian component disappears.

\begin{fact}
\label{fact_idpp}
A point process $Z$ is infinitely divisible as a point process if and only if $\lambda = 0$ and $\Lambda$ is concentrated on $\Nstar$, where $\lambda$ and $\Lambda$ are the measures from Fact \ref{fact_idrm}. Then, \eqref{eq_Lambda} is equivalent to $\Lambda(\mu(A) > 0) < \infty$ for every bounded Borel set $A\subset\R$.
\end{fact}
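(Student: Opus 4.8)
The plan is to prove both implications separately, taking Fact~\ref{fact_idrm} and its probabilistic interpretation as the starting point and reducing every statement about the point-process structure to the one-dimensional random variables $Z(A)$, $A\subset\R$ a bounded Borel set.

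For the easy direction, suppose $\lambda=0$ and $\Lambda$ is concentrated on $\Nstar$. By the interpretation following Fact~\ref{fact_idrm}, $Z$ is equal in law to the superposition $\sum_j\mu_j$ of the atoms $\mu_j$ of a Poisson process on $\Mstar$ of intensity $\Lambda$. Since $\Lambda$ is carried by $\Nstar$, each $\mu_j$ is a counting measure, and the integrability condition \eqref{eq_Lambda} ensures that only finitely many $\mu_j$ charge any bounded $A$, so the superposition is locally finite and $Z\in\Ncal$ almost surely. To obtain infinite divisibility \emph{as a point process}, I would split the Poisson process into $n$ independent Poisson processes of intensity $\Lambda/n$; each yields a point process $Z^{(i)}$, these are iid, and $Z\deq Z^{(1)}+\cdots+Z^{(n)}$. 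As $\Lambda/n$ again satisfies \eqref{eq_Lambda}, each $Z^{(i)}$ is a genuine point process, which proves this direction.

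For the converse, assume $Z$ is infinitely divisible as a point process; it is then infinitely divisible as a random measure, so Fact~\ref{fact_idrm} provides $(\lambda,\Lambda)$. Fix a bounded Borel set $A$. Writing $Z\deq Z^{(1)}+\cdots+Z^{(n)}$ with iid point processes $Z^{(i)}$ gives $Z(A)\deq Z^{(1)}(A)+\cdots+Z^{(n)}(A)$, a sum of $n$ iid $\Z_{\ge0}$-valued random variables, so $Z(A)$ is infinitely divisible \emph{within the class of} $\Z_{\ge0}$-valued laws, i.e.\ its generating function has $\Z_{\ge0}$-valued convolution roots of every order. Taking $f=s\Ind_A$ in Fact~\ref{fact_idrm}, the Laplace exponent of $Z(A)$ is $s\lambda(A)+\int_{(0,\infty)}(1-e^{-sx})\,\rho_A(\dd x)$, where $\rho_A$ is the image of $\Lambda|_{\{\mu(A)>0\}}$ under $\mu\mapsto\mu(A)$; this is the real-line L\'evy--Khintchine representation of the non-negative infinitely divisible variable $Z(A)$, so $\lambda(A)$ is its drift and $\rho_A$ its L\'evy measure. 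I would then invoke the classical characterisation of infinitely divisible $\Z_{\ge0}$-valued laws as compound Poisson with jumps in $\N=\{1,2,\dots\}$, equivalently with generating function $\exp(\theta(P(z)-1))$ for $\theta\ge0$ and a generating function $P$ with $P(0)=0$. The decisive point, and exactly where ``as a point process'' is used rather than merely ``as a random measure'', is that a nonzero deterministic integer (for instance $Z(A)\equiv1$ coming from $\delta_0$) is infinitely divisible as a real variable but not within $\Z_{\ge0}$: infinite divisibility within $\Z_{\ge0}$ forces $\P(Z(A)=0)>0$, since otherwise $Z(A)\ge n$ almost surely for all $n$, and this rules out any integer drift. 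Matching the compound Poisson exponent, which has drift $0$ and L\'evy measure on $\N$, against the display above and using uniqueness of the one-dimensional L\'evy--Khintchine representation yields $\lambda(A)=0$ and $\rho_A((0,\infty)\setminus\N)=0$ for every bounded Borel $A$.

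Letting $A$ range over all bounded Borel sets gives $\lambda=0$, and specialising to a countable ring of dyadic intervals, together with the elementary fact that a Radon measure assigning an integer mass to every dyadic interval must be a counting measure, upgrades the $\Lambda$-almost-everywhere relations $\mu(A)\in\Z_{\ge0}$ to $\Lambda(\Mstar\setminus\Nstar)=0$, i.e.\ $\Lambda$ is concentrated on $\Nstar$. Finally, under $\Lambda(\Mstar\setminus\Nstar)=0$ the equivalence of the integrability conditions is immediate: for $\Lambda$-a.e.\ $\mu$ the value $\mu(A)$ is a non-negative integer, so the mass at $x=0$ is annihilated by the factor $1\wedge x$ while every integer $x\ge1$ contributes $1\wedge x=1$, whence $\int_0^\infty(1\wedge x)\,\Lambda(\mu(A)\in\dd x)=\Lambda(\mu(A)>0)$, so that \eqref{eq_Lambda} reads exactly $\Lambda(\mu(A)>0)<\infty$. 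I expect the main obstacle to be the rigorous form of the converse: correctly applying the compound Poisson characterisation so as to isolate the integer-drift phenomenon separating the two notions of infinite divisibility, and cleanly passing from the one-dimensional statements about each $Z(A)$ to the single statement that $\Lambda$ is carried by $\Nstar$, for which the countable determining class of dyadic intervals is the essential device.
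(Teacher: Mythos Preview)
The paper does not prove Fact~\ref{fact_idpp}; it is quoted as a known result from Daley and Vere-Jones \cite[Proposition~10.2.V]{DV2008}, in the same way that Fact~\ref{fact_idrm} is quoted from \cite{Kal1983}. So there is no ``paper's own proof'' to compare against.

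That said, your argument is a sound reconstruction of the standard proof and I see no genuine gap. The forward direction via thinning the Poisson process of intensity $\Lambda$ into $n$ iid pieces is the usual one-line argument. For the converse, your reduction to the one-dimensional law of $Z(A)$, the identification of $\lambda(A)$ and $\rho_A$ with the drift and L\'evy measure of the subordinator-type representation of $Z(A)$, and the appeal to the compound Poisson form of $\Z_{\ge0}$-infinitely-divisible laws are all correct; the observation that $\P(Z(A)=0)>0$ (else $Z(A)\ge n$ a.s.\ for every $n$) is exactly the point that separates ``infinitely divisible as a point process'' from ``infinitely divisible as a random measure''. The passage from integer values on all dyadic intervals to $\mu\in\Ncal$ is also fine: on each $[k,k+1)$ with $\mu([k,k+1))=N$, at most $N$ dyadic subintervals at every level carry positive mass, so the support is finite with integer atoms. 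The final line showing that \eqref{eq_Lambda} collapses to $\Lambda(\mu(A)>0)<\infty$ when $\Lambda$ lives on $\Nstar$ is immediate, as you wrote.
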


In particular, the L\'evy/KLM measure of a Poisson process on $\R$ with intensity measure $\nu(\dd x)$ is the image of $\nu$ by the map $x\mapsto \delta_x$.

\subsection{Proof of Theorem~\ref{th_random_measure}}
\label{sec_proof}
We can now prove Theorem~\ref{th_random_measure} and Corollary~\ref{cor_PP}. For the ``if'' part, we note that \eqref{eq_condition_Delta} implies \eqref{eq_Lambda} for the measure $\Lambda = \int e^{-x}T_x\Delta\,\dd x$, such that the process with cumulant given by \eqref{eq_K_exp-stable} exists. The exp-stability is readily verified. Further note that for point processes the condition \eqref{eq_condition_D_PP} is equivalent to \eqref{eq_condition_Delta}.

It remains to prove the ``only if'' parts. Let $Z$ be an exp-stable random measure. Then, for $\alpha,\beta\in\R$, such that $e^\alpha + e^\beta = 1$, we have
\[\begin{split}
K(f) = -\log \E[\exp(-\langle Z,f\rangle )] &= -\log \E[\exp(-\langle T_\alpha Z,f\rangle )] - \log \E[\exp(-\langle T_\beta Z,f\rangle )]\\
&= K(f(\cdot + \alpha)) + K(f(\cdot + \beta)).
\end{split}\]
Setting $\varphi(x) = K(f(\cdot + \log x))$ for $x \in \R_+$ (with $\varphi(0) = 0$) and replacing $f$ by $f(\cdot + \log x)$ in the above equation, we get $\varphi(x) = \varphi(xe^\alpha) + \varphi(xe^\beta)$ for all $x\in\R_+$, or $\varphi(x)+\varphi(y) = \varphi(x+y)$ for all $x,y\in\R_+$. This is the famous Cauchy functional equation and since $\varphi$ is by definition non-negative on $\R_+$, it is known and easy to show \cite{Darboux1880} that $\varphi(x) = \varphi(1) x$ for all $x\in\R_+$. As a consequence, we obtain the following corollary:
\begin{corollary}
\label{cor_exp} $K(f(\cdot+x)) = e^x K(f)$ for all $x\in\R$.
\end{corollary}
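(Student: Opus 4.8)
The plan is to read the corollary off directly from the homogeneity of $\varphi$ just established, by undoing the exponential reparametrisation. Recall that we set $\varphi(x) = K(f(\cdot + \log x))$ for $x \in \R_+$ and proved, via the Cauchy functional equation together with non-negativity, that $\varphi(x) = \varphi(1)\,x$ for every $x \in \R_+$. The first step is to identify the constant $\varphi(1)$: by the defining relation, $\varphi(1) = K(f(\cdot + \log 1)) = K(f(\cdot)) = K(f)$, so the linear identity becomes $\varphi(x) = x\,K(f)$ for all $x \in \R_+$.

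Next I would substitute $x = e^t$. Since $x = e^t$ sweeps out all of $\R_+$ as $t$ ranges over $\R$, the identity applies for every real $t$ and gives $\varphi(e^t) = e^t\,K(f)$. On the other hand, unwinding the definition of $\varphi$ at the point $e^t$ yields $\varphi(e^t) = K(f(\cdot + \log e^t)) = K(f(\cdot + t))$. Comparing the two expressions produces $K(f(\cdot + t)) = e^t\,K(f)$ for all $t \in \R$, which is precisely the asserted identity (after renaming $t$ as $x$).

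There is no genuine obstacle here: the corollary is just a restatement of the linearity of $\varphi$ transported back into the original translation coordinate. The only point deserving a moment's care is to confirm that $t \mapsto e^t$ is a bijection of $\R$ onto $\R_+$, so that the conclusion holds for \emph{all} real $x$ — in particular for negative $x$, where $e^x < 1$ — and not merely for $x \ge 0$.
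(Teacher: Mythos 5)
Your proposal is correct and matches the paper exactly: the corollary is indeed obtained there as an immediate consequence of the identity $\varphi(x)=\varphi(1)x$ on $\R_+$, unwound via the substitution $x=e^t$ with $\varphi(1)=K(f)$. Your additional remark that $t\mapsto e^t$ bijects $\R$ onto $\R_+$, so the conclusion holds for all real $x$ including negative ones, is a sensible point of care that the paper leaves implicit.
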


Furthermore, it is easy to show that exp-stability implies infinite divisibility. We then have the following lemma.

\begin{lemma}
\label{lem_density}
Let $\lambda,\Lambda$ be the measures corresponding to $Z$ by Fact~\ref{fact_idrm}. 
\begin{enumerate}[nolistsep]
\item There exists a constant $c\ge0$, such that $\lambda = ce^{-x}\,\dd x$.
\item For every $x\in\R$, we have $T_x\Lambda = e^x\Lambda$.
\item For $\Lambda$-almost every $\mu$, we have $\mu(\R_+) < \infty$.
\end{enumerate}
\end{lemma}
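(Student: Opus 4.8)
The plan is to prove the three assertions of Lemma~\ref{lem_density} in order, using the homogeneity relation $K(f(\cdot+x)) = e^x K(f)$ from Corollary~\ref{cor_exp} together with the uniqueness of the pair $(\lambda,\Lambda)$ in the L\'evy--Khinchin representation of Fact~\ref{fact_idrm}. The key observation is that translating the test function $f$ by $x$ corresponds to translating the random measure $Z$ by $-x$, i.e.\ $\langle Z, f(\cdot+x)\rangle = \langle T_{-x}Z, f\rangle$, so that Corollary~\ref{cor_exp} says the law of $T_{-x}Z$ has cumulant $e^x K(f)$. Applying Fact~\ref{fact_idrm} to both $Z$ and its translate and invoking uniqueness should force the transformation laws on $\lambda$ and $\Lambda$.

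For parts (1) and (2), I would spell out the cumulant of $T_{-x}Z$: substituting $f(\cdot+x)$ into the representation gives
\[
K(f(\cdot+x)) = \langle \lambda, f(\cdot+x)\rangle + \int_{\Mstar}[1-\exp(-\langle\mu,f(\cdot+x)\rangle)]\,\Lambda(\dd\mu).
\]
Since $\langle\lambda, f(\cdot+x)\rangle = \langle T_{-x}\lambda, f\rangle$ and $\langle\mu, f(\cdot+x)\rangle = \langle T_{-x}\mu, f\rangle$, a change of variables in the integral rewrites this as the L\'evy--Khinchin representation with Gaussian component $T_{-x}\lambda$ and L\'evy measure $T_{-x}\Lambda$ (the pushforward of $\Lambda$ under $\mu\mapsto T_{-x}\mu$). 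On the other hand, Corollary~\ref{cor_exp} equates this with $e^x K(f)$, whose representation has Gaussian component $e^x\lambda$ and L\'evy measure $e^x\Lambda$. By the uniqueness in Fact~\ref{fact_idrm} I conclude $T_{-x}\lambda = e^x\lambda$ and $T_{-x}\Lambda = e^x\Lambda$ for every $x$; replacing $x$ by $-x$ gives part (2). For part (1), the relation $T_{-x}\lambda = e^x\lambda$ means $\lambda$ has a density $g$ satisfying $g(y+x) = e^x g(y)$ for a.e.\ $y$, which forces $g(y) = g(0)e^{-y}$, i.e.\ $\lambda = c e^{-x}\,\dd x$ with $c = g(0)\ge 0$.

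For part (3), I want to show $\Lambda(\mu(\R_+)=\infty) = 0$. The natural route is to use the integrability condition \eqref{eq_Lambda} of Fact~\ref{fact_idrm} in combination with the scaling law $T_x\Lambda = e^x\Lambda$ just established. Taking $A = (0,1]$ (or any fixed bounded set) in \eqref{eq_Lambda} gives $\int_0^\infty (1\wedge y)\,\Lambda(\mu(A)\in\dd y) < \infty$, which in particular forces $\Lambda(\mu(A) = \infty) = 0$ and more usefully bounds $\Lambda(\mu(A) > \epsilon)$ for each $\epsilon$. The homogeneity $T_x\Lambda = e^x\Lambda$ lets me transfer this control from $A$ to every translate $A+x$, after which I would try to cover $\R_+$ by countably many unit intervals and argue that $\Lambda$-a.e.\ $\mu$ charges only finitely many of them with positive mass, hence $\mu(\R_+)<\infty$.

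I expect part (3) to be the main obstacle. The delicate point is that finiteness of $\mu(A+x)$ on each single interval does not immediately yield finiteness of the total mass $\mu(\R_+) = \sum_n \mu(A+n)$; I need the scaling relation to produce a summable tail bound. Concretely, the heuristic is that $\Lambda(\mu((x,\infty)) \ge 1)$ should behave like $e^{x}$ times a constant as $x\to-\infty$ but decay suitably as $x\to+\infty$, making the contribution from far-right intervals negligible under $\Lambda$; turning this into a rigorous Borel--Cantelli-type argument under the infinite measure $\Lambda$, using \eqref{eq_Lambda} and $T_x\Lambda = e^x\Lambda$ to get the quantitative decay, is where the real work lies.
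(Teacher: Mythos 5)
Your parts (1) and (2) are essentially the paper's argument: apply Fact~\ref{fact_idrm} to the translated process, use Corollary~\ref{cor_exp} to see that both $(T_x\lambda,T_x\Lambda)$ and $(e^x\lambda,e^x\Lambda)$ represent the same infinitely divisible random measure, and invoke uniqueness of the pair. One small caveat in (1): you assert that $T_{-x}\lambda=e^x\lambda$ ``means $\lambda$ has a density $g$'', but absolute continuity is not automatic and needs a word of justification. Either observe that the relation makes $e^x\lambda(\dd x)$ translation invariant, hence a multiple of Lebesgue measure, or argue as the paper does, without densities: $\lambda([0,1))=:c_1<\infty$ since $\lambda$ is Radon, the scaling gives $\lambda([n,n+1))=c_1e^{-n}$, and summing yields $\lambda([x,\infty))=ce^{-x}$ for all $x$, which identifies $\lambda=ce^{-x}\,\dd x$.

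Part (3) — which you yourself flag as the real work — has a genuine gap, and the intermediate goal you propose is in fact false. It is not true in general that $\Lambda$-a.e.\ $\mu$ charges only finitely many unit intervals of $\R_+$: take $\Delta=\delta_\nu$ with $\nu$ placing mass $e^{-k^2}$ at each integer $k\ge1$ (shifted so $M(\nu)=0$); then $\Lambda=\int e^{-x}T_x\Delta\,\dd x$ satisfies \eqref{eq_Lambda}, yet a.e.\ $\mu$ charges infinitely many intervals while still $\mu(\R_+)<\infty$. What must be shown is not that the masses $\mu(I_n)$ eventually vanish but that they are \emph{summable}. Your fallback heuristic via $\Lambda(\mu((x,\infty))\ge1)$ is also circular: by the scaling it equals $e^{-x}\Lambda(\mu((0,\infty))\ge1)$, and since $(0,\infty)$ is unbounded, \eqref{eq_Lambda} gives no a priori finiteness of the constant $\Lambda(\mu((0,\infty))\ge1)$ — its finiteness is essentially what part (3) is trying to establish. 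The missing quantitative idea (the paper's trick) is a decaying-but-summable threshold. With $I=[0,1)$ and $I_n=[n,n+1)$, condition \eqref{eq_Lambda} plus a Chebyshev bound gives $\Lambda(\mu(I)>t)\le C/t$; choosing thresholds $e^{-n/2}$, which decay strictly slower than the scaling factor $e^{-n}$ yet are summable, part (2) yields $\Lambda(\mu(I_n)>e^{-n/2})=e^{-n}\Lambda(\mu(I)>e^{-n/2})\le Ce^{-n/2}$, whose sum over $n$ is finite. The first Borel--Cantelli lemma — valid for the infinite measure $\Lambda$, since it uses only countable subadditivity — then shows that $\Lambda$-a.e.\ $\mu$ satisfies $\mu(I_n)\le e^{-n/2}$ for all large $n$, whence $\mu(\R_+)=\sum_n\mu(I_n)<\infty$, the finitely many exceptional terms being finite because $\mu$ is Radon. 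Without some such choice of thresholds your covering argument cannot close.
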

\begin{proof}
The measures $T_x\lambda$, $T_x\Lambda$ are the measures corresponding to the infinitely divisible random measure $T_xZ$ by Fact \ref{fact_idrm}. But by Corollary \ref{cor_exp}, the measures $e^x\lambda$ and $e^x \Lambda$ correspond to $T_xZ$, as well. Since these measures are unique, we have $T_x\lambda = e^x\lambda$ and $T_x\Lambda = e^x\Lambda$. The second statement follows immediately. For the first statement, note that $c_1 = \lambda([0,1)) < \infty$, since $[0,1)$ is a bounded set. It follows that \[\lambda([0,\infty)) = \sum_{n\ge0} \lambda([n,n+1)) = \sum_{n\ge0} c_1e^{-n} = \frac{c_1e}{e-1} =: c,\]
hence $\lambda([x,\infty)) = ce^{-x}$ for every $x\in\R$. The first statement of the lemma follows. For the third statement, let 
  $I_n = [n,n+1)$ and $I = [0,1)$. By \eqref{eq_Lambda}, we have \[\int_0^1\Lambda(\mu(I) > x) \,\dd x = \int_0^1 x\Lambda(\mu(I)\in \dd x)  <\infty.\] By monotonicity, the first integral is greater than or equal to $x \Lambda(\mu(I) > x)$ for every $x\in[0,1]$, hence $\Lambda(\mu(I) > x) \le C/x$ for some constant $0\le C < \infty$. By the second statement, it follows that
\[
\Lambda(\mu(I_n) > e^{-n/2}) = e^{-n} \Lambda(\mu(I) > e^{-n/2}) \le C e^{-n/2},
\] 
for every $n\in\N$. Hence, $\sum_{n\in\N} \Lambda\left(\mu(I_n) > e^{-n/2}\right) < \infty$. By the Borel-Cantelli lemma,
\[\Lambda\left(\limsup_{n\to\infty} \left\{\mu(I_n) > e^{-n/2}\right\}\right) = 0,\] 
which implies the third statement.
\end{proof}

\begin{lemma}
\label{lem_decomp}
The measure $\Lambda$ admits the decomposition $\Lambda = \int e^{-x}T_x\Delta\,\dd x$, where $\Delta$ is a unique measure on $\mathcal M^*$ with $\Delta(M(\mu)\ne 0) = 0$.
\end{lemma}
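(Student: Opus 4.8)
The plan is to use the homogeneity property $T_x\Lambda = e^x\Lambda$ from Lemma~\ref{lem_density}(2) as a disintegration statement, exploiting the fact that the translation group $(T_x)_{x\in\R}$ acts on $\Mstar$ with the map $M$ serving as a natural coordinate along the orbits. The key observation is that $M(T_x\mu) = M(\mu)+x$, so $M$ is a ``co-cycle'' for the translation action; consequently every $\mu\in\Mstar$ with $M(\mu)<\infty$ can be written uniquely as $\mu = T_{M(\mu)}\bar\mu$ where $\bar\mu = T_{-M(\mu)}\mu$ satisfies $M(\bar\mu)=0$. By Lemma~\ref{lem_density}(3), $\Lambda$ is concentrated on $\{\mu:\mu(\R_+)<\infty\}=\{\mu:M(\mu)<\infty\}$ (up to the possible null measure, already excluded), so this reparametrisation is $\Lambda$-almost everywhere well defined.

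First I would define the measurable bijection $\Phi:\Mstar\cap\{M<\infty\}\to \R\times\{\mu\in\Mstar:M(\mu)=0\}$ by $\Phi(\mu)=(M(\mu),T_{-M(\mu)}\mu)$, with inverse $(x,\nu)\mapsto T_x\nu$; both maps are measurable by the continuity of $(x,\mu)\mapsto T_x\mu$ and of $M$. Push $\Lambda$ forward under $\Phi$ to obtain a measure $\widetilde\Lambda$ on $\R\times\{M=0\}$. The homogeneity $T_x\Lambda=e^x\Lambda$ translates, under $\Phi$, into a quasi-invariance of $\widetilde\Lambda$ under the shift in the first coordinate: shifting $x\mapsto x+s$ multiplies the measure by $e^{-s}$. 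I would then invoke the standard fact that a $\sigma$-finite measure on $\R\times S$ (for $S$ a Polish space) whose translates in the $\R$-direction scale by $e^{-s}$ must be of the product form $e^{-x}\,\dd x\times\Delta$ for a unique measure $\Delta$ on $S=\{M=0\}$; this is essentially the same Radon--Nikodym / translation-invariance argument that produced $\lambda=ce^{-x}\,\dd x$ in Lemma~\ref{lem_density}(1), now applied fibrewise. Unwinding $\Phi^{-1}$ then yields exactly $\Lambda=\int_\R e^{-x}T_x\Delta\,\dd x$, with $\Delta$ supported on $\{M(\mu)=0\}$, i.e.\ $\Delta(M(\mu)\ne 0)=0$.

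For uniqueness, suppose $\Lambda = \int e^{-x}T_x\Delta\,\dd x = \int e^{-x}T_x\Delta'\,\dd x$ with both $\Delta,\Delta'$ supported on $\{M=0\}$. Applying $\Phi$ again turns this into $e^{-x}\dd x\times\Delta = e^{-x}\dd x\times\Delta'$ on $\R\times\{M=0\}$, and by the uniqueness of product disintegrations (Fubini/monotone class) this forces $\Delta=\Delta'$. The support constraint $\Delta(M(\mu)\ne0)=0$ is what pins down the representative: it fixes the ``origin'' of the $\R$-coordinate on each orbit, removing the gauge freedom $\Delta\mapsto$ (image of $T_{-s}\Delta$ with compensating factor) that would otherwise make the decomposition non-unique.

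The main obstacle I anticipate is \emph{measurability and $\sigma$-finiteness bookkeeping} rather than any deep idea: I must check that $\Lambda$ restricted to $\{M<\infty\}$ is $\sigma$-finite (so that disintegration is legitimate), that $\Phi$ and $\Phi^{-1}$ are genuinely Borel maps between the relevant Polish spaces, and that the fibrewise translation-scaling argument can be carried out jointly in $(x,\nu)$ rather than only for fixed test sets. The $\sigma$-finiteness follows from condition~\eqref{eq_Lambda}: for each bounded $A$, $\Lambda(\mu(A)>\varepsilon)<\infty$, and the sets $\{\mu(A_n)>\varepsilon_n\}$ exhaust $\{M<\infty\}$ as in the Borel--Cantelli step of Lemma~\ref{lem_density}. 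Once these technical points are in place, the product structure and uniqueness are forced, and the formula~\eqref{eq_K_exp-stable} of Theorem~\ref{th_random_measure} follows by substituting $\Lambda=\int e^{-x}T_x\Delta\,\dd x$ into the Lévy--Khinchine form of Fact~\ref{fact_idrm} together with $\lambda=ce^{-x}\,\dd x$ from Lemma~\ref{lem_density}(1).
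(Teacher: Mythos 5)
Your proposal is correct and takes essentially the same route as the paper's proof (which follows Rosinski's Proposition~4.2): the Borel isomorphism $\mu\mapsto(T_{-M(\mu)}\mu,M(\mu))$ onto $\{M=0\}\times\R$, pushforward of $\Lambda$, disintegration into a measure on the fibre space times a kernel in the $\R$-coordinate, and the homogeneity $T_x\Lambda=e^x\Lambda$ forcing that kernel to be $c\,e^{-m}\,\dd m$, with the support condition $\Delta(M(\mu)\ne0)=0$ pinning down uniqueness. The technical points you flag ($\sigma$-finiteness via \eqref{eq_Lambda}, Borel measurability of the isomorphism, and carrying the fibrewise scaling argument jointly rather than for fixed shifts) are precisely the ones the paper handles via the conditional-distribution theorem and the normalisation $K(\mu,[0,1])=1$.
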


\begin{proof}
  We follow the proof of Proposition~4.2 in \cite{Rosinski1990}. Set $\Mstar_0 := \{\mu\in\Mstar:M(\mu)=0\}$ and $\Mstar_\R := \{\mu\in\Mstar:M(\mu)<\infty\}$, which are measurable subspaces of the complete separable metric space $\Mstar$ and therefore Borel spaces \cite[Theorem~A1.6]{Kallenberg1997}. By the continuity of $(x,\mu)\mapsto T_x\mu$, the map $\phi: \Mstar_\R\to \Mstar_0\times \R$ defined by $\phi(\mu) = (T_{-M(\mu)}\mu,M(\mu))$ is a Borel isomorphism, i.e.\ it is bijective and $\phi$ and $\phi^{-1}$ are measurable. The translation operator $T_x$ acts on $\Mstar_0\times \R$ by $T_x(\mu,m) = (\mu,m+x)$.

Now note that $\Lambda$ is supported on $\Mstar_\R$ by the third part of Lemma~\ref{lem_density}. Denote by $\Lambda^\phi$ the image of $\Lambda$ by the map $\phi$ and set $A_n = \{\mu\in\Mstar_0:\mu([-2n,2n]) \ge 1/n\}$. Then $\Lambda^\phi(A_n\times[-n,n])<\infty$ for every $n\in\N$ by \eqref{eq_Lambda}. By the theorem on the existence of conditional probability distributions (see e.g.\ \cite{Kallenberg1997}, Theorems~5.3 and 5.4) there exists then a measure $\Delta_0$ on $\Mstar_0$ with $\Delta_0(A_n)<\infty$ for every $n\in\N$ and a measurable kernel $K(\mu,\dd m)$, with $K(\mu,[-n,n])<\infty$ for every $n\in\N$, such that 
  \begin{equation*}
\Lambda^\phi(\dd \mu,\dd m) = \int_{\Mstar_0} \Delta_0(\dd\mu)K(\mu,\dd m).
  \end{equation*}
Moreover, we can assume in the above construction that $K(\mu,[0,1]) = 1$ for every $\mu\in\Mstar_0$ and $n\in\N$, and with this normalization, $\Delta_0$ is unique. By Lemma~\ref{lem_density}, we now have $T_xK(\mu,\dd m) = e^x K(\mu,\dd m)$ for every $x\in\R$ and $\mu\in\Mstar_0$. As in the proof of the first statement of Lemma~\ref{lem_density}, we then conclude that $K(\mu,\dd m) = c(\mu)e^{-m}\,\dd m$ for some constant $c(\mu)\ge 0$, and by the above normalization, $c(\mu)\equiv c := e/(e-1)$. Setting $\Delta(\dd m) = c\Delta_0(\dd m)$ then gives
\begin{equation*}
  \Lambda^\phi(\dd \mu,\dd m) = \int_{\Mstar_0} \Delta(\dd\mu)e^{-m}\,\dd m.
\end{equation*}
Mapping $\Lambda^\phi$ back to $\Mstar_\R$ by the map $\phi^{-1}$ finishes the proof.
\end{proof}

The ``only if'' part of Theorem~\ref{th_random_measure} now follows from the previous lemmas and Fact~\ref{fact_idrm}. As for the proof of Corollary~\ref{cor_PP}, if $Z$ is a point process, then Fact~\ref{fact_idpp} implies that $\lambda = 0$ and that $\Lambda$ is concentrated on $\Nstar$, hence $\Delta$ as well. Equation \eqref{eq_condition_Delta} then implies that $\Delta(\mu(A)>0) < \infty$ for any bounded Borel set $A\subset\R$. In particular, this holds for $A=\{0\}$. But by Lemma~\ref{lem_decomp},  $\Delta$ is concentrated on $\Nstar_0=\{\mu\in\Nstar:M(\mu)=0\}$ and is therefore a finite measure, since $\mu\in\Nstar_0$ implies $\mu(\{0\}) > 0$.

Now, if $\P(Z\ne 0)>0$ (the other case is trivial), then $\Delta(\Nstar_0)>0$ and we set $m=\log \Delta(\Nstar_0)$. The measure $\Delta' = e^{-m}T_m\Delta$ is then a probability measure and $\Lambda = \int e^{-x}T_x\Delta'\,\dd x$. Furthermore, $Z$ satisfies \eqref{eq_definition_dppp}, where $D$ follows the law $\Delta'$. Uniqueness of the pair $(m,D)$ follows from Lemma~\ref{lem_decomp}. This finishes the proof of Corollary~\ref{cor_PP}.

\subsection{Finiteness of the intensity}
\label{sec_dppp}
If $Z$ is an exp-stable point process and has finite intensity (i.e.\ $E[Z(A)] < \infty$ for every bounded Borel set $A\subset \R$), then it is easy to show that the intensity is proportional to $e^{-x}\,\dd x$. However, in the process which occurs in the extremal particles of branching Brownian motion or branching random walk, the intensity of the point process $D$ grows with $|x|e^{|x|}$, as $x\to -\infty$ \cite[Section~4.3]{Brunet2011a}. The following simple result shows that in these cases, $Z$ does not have finite intensity.
\begin{proposition}
\label{prop_dppp_exp}
Let $Z$ be an exp-stable point process on $\R$ and let $D$ be the point process from Corollary~\ref{cor_PP}. Then $Z$ has finite intensity if and only if $\E[\langle D,e^x\rangle ] < \infty$.
\end{proposition}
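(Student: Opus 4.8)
The plan is to compute the intensity measure of $Z$ explicitly from the cluster representation \eqref{eq_definition_dppp} provided by Corollary~\ref{cor_PP}, and to observe that it factorises into a deterministic measure on $\R$ times the scalar $\E[\langle D,e^x\rangle]$. Once this factorisation is established, the equivalence is immediate.

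First I would invoke the marking theorem: the pairs $(\xi_i,D_i)_{i\ge1}$ form a Poisson process on $\R\times\Ncal$ with intensity $e^{-x}\,\dd x\otimes\P_D$, where $\P_D$ denotes the law of $D$. Since $\langle T_{\xi_i}D_i,f\rangle=\langle D_i,f(\cdot+\xi_i)\rangle$ for any $f\in\mathscr F$, Campbell's formula yields
\[
\E[\langle Z,f\rangle] = \int_\R e^{-x}\,\E[\langle D,f(\cdot+x)\rangle]\,\dd x .
\]
All integrands here are non-negative, so this identity holds without any integrability hypothesis (both sides may equal $+\infty$), which is exactly what we want since the whole point is to decide when the left-hand side is finite.

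Next I would apply Tonelli's theorem to write $\E[\langle D,f(\cdot+x)\rangle]=\int_\R f(y+x)\,\nu(\dd y)$ with $\nu=\E[D(\cdot)]$ the intensity measure of $D$, and to interchange freely the expectation over $D$ with the two integrations. Performing the change of variables $u=y+x$ and using $e^{-x}=e^{y}e^{-u}$ along $u=y+x$, the double integral factorises:
\[
\E[\langle Z,f\rangle] = \left(\int_\R e^{y}\,\nu(\dd y)\right)\left(\int_\R f(u)e^{-u}\,\dd u\right) = \E[\langle D,e^x\rangle]\int_\R f(u)e^{-u}\,\dd u .
\]
Taking $f=\Ind_A$ for a bounded Borel set $A$ then gives $\E[Z(A)]=\E[\langle D,e^x\rangle]\int_A e^{-u}\,\dd u$.

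Finally, since $\int_A e^{-u}\,\dd u$ is finite for every bounded Borel set $A$ and strictly positive for at least one such set (e.g.\ $A=[0,1]$), the intensity $\E[Z(A)]$ is finite for all bounded Borel $A$ precisely when $\E[\langle D,e^x\rangle]<\infty$, which is the claimed equivalence. I do not anticipate a serious obstacle: the only steps needing care are the application of the marking theorem together with Campbell's formula and the Tonelli interchange, all of which are unproblematic because every integrand is non-negative.
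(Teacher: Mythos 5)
Your proof is correct and follows essentially the same route as the paper: both compute $\E[Z(A)]$ from the cluster representation \eqref{eq_definition_dppp} via Campbell's formula and Fubini--Tonelli, then test on a bounded set of positive Lebesgue measure. The only (harmless) difference is that you perform the change of variables exactly, obtaining the clean identity $\E[Z(A)]=\E[\langle D,e^x\rangle]\int_A e^{-u}\,\dd u$, whereas the paper stops at the two-sided sandwich bounds $|A|e^{-\max A}e^x \le \int_\R \Ind_{A-y}(x)e^{-y}\,\dd y \le |A|e^{-\min A}e^x$, which give the same equivalence.
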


\begin{proof}
By the Fubini--Tonelli theorem,
\[E[Z(A)] = \E\left[\sum_{i\in\N} \E[T_{\xi_i}D(A)\,|\,\xi]\right]= \int_\R \E[D(A-y) e^{-y}]\, \dd y = \E\left[\int_\R D(A-y) e^{-y}\, \dd y\right],\]
for every bounded Borel set $A\subset \R$. Again by the Fubini--Tonelli theorem we have
\[\int_\R D(A-y) e^{-y}\, \dd y = \int_\R \int_\R \Ind_{A-y}(x)e^{-y}\, \dd y\, D(\dd x) = \langle D,\int_\R \Ind_{A-y}(\cdot)e^{-y}\, \dd y \rangle.\]
For $x\in\R$, $x\in A-y$ implies $y \in [\min A - x,\max A - x]$. Since $e^{-y}$ is decreasing, we therefore have
\[|A|e^{-\max A}e^x \le \int_\R \Ind_{A-y}(x)e^{-y}\, \dd y \le |A|e^{-\min A}e^x,\] where $|A|$ denotes the Lebesgue measure of $A$. We conclude that $E[Z(A)] < \infty$ if and only if $\E[\langle D,e^x\rangle ] < \infty.$
\end{proof}

\providecommand{\bysame}{\leavevmode\hbox to3em{\hrulefill}\thinspace}
\providecommand{\MR}{\relax\ifhmode\unskip\space\fi MR }
\providecommand{\MRhref}[2]{%
  \href{http://www.ams.org/mathscinet-getitem?mr=#1}{#2}
}
\providecommand{\href}[2]{#2}

\section*{Acknowledgements}
I thank two anonymous referees for having spotted several typographical errors in the manu\-script and for having requested more details and explanations, which greatly benefitted the presentation. One referee pointed out the reference \cite{Kabluchko2009}.

\end{document}